\documentclass[12pt]{amsart}

\usepackage{graphicx}
\usepackage{amsmath}
\usepackage{amscd}
\usepackage{amsfonts}
\usepackage{amssymb}
\usepackage{color}

%

\numberwithin{equation}{section}

\setcounter{MaxMatrixCols}{30}
\setcounter{secnumdepth}{2}

\setcounter{tocdepth}{1}

\newtheorem{theorem}{Theorem}[section]

\newtheorem{proposition}[theorem]{Proposition}

\newtheorem{definition}[theorem]{Definition}
\newtheorem{corollary}[theorem]{Corollary}

\theoremstyle{definition}

\newtheorem{remark}[theorem]{Remark}


%

\newcommand{\be}{\begin{equation}}
\newcommand{\ee}{\end{equation}}
\newcommand{\bes}{\begin{equation*}}
\newcommand{\ees}{\end{equation*}}

\newcommand{\cP}{\mathcal{P}}
\newcommand{\cQ}{\mathcal{Q}}

\newcommand{\lel}{\left\langle}
\newcommand{\rir}{\right\rangle}

\newcommand{\mb}[1]{\mathbb{#1}}

\renewcommand{\phi}{\varphi}

\begin{document}

\title{Unitary $N$-dilations for tuples of commuting matrices}

\author{John E. M\raise.45ex\hbox{\tiny{c}}Carthy}
\address{Dept. of Mathematics\\Washington University\\
St. Louis, MO 63130\\
USA}
\email{mccarthy@wustl.edu}

\author{Orr Moshe Shalit}

\address{Pure Mathematics Dept.\\
University of Waterloo\\
Waterloo, ON\; N2L--3G1\\
CANADA}
\email{oshalit@uwaterloo.ca}
\subjclass[2010]{47A20 (Primary), 15A45, 47A57 (Secondary)}

\begin{abstract}
We show that whenever a contractive $k$-tuple $T$ on a finite dimensional space $H$
has a unitary dilation, then for any fixed degree $N$ there is a unitary $k$-tuple $U$ on a
finite dimensional space so that $q(T) = P_H q(U) |_H$ for all polynomials $q$ of degree at most $N$. 
\end{abstract}

\maketitle

\section{Introduction and statement of the main results}


Let $T_1, \ldots, T_k$ be a $k$-tuple of commuting contractions on a Hilbert space $H$. A $k$-tuple $U_1, \ldots, U_k$ of commuting unitaries on a Hilbert space $K$ is said to be a \emph{unitary dilation for} $T_1, \ldots, T_k$ if $H$ is a subspace of $K$ and if for all $n_1, \ldots, n_k \in \mb{N}$ the operator $T_1^{n_k} \cdots T_k^{n_k}$ is the compression of $U_1^{n_1} \cdots U_k^{n_k}$ onto $H$, meaning that
\bes
T_1^{n_k} \cdots T_k^{n_k} = P_H U_1^{n_1} \cdots U_k^{n_k} \big|_H .
\ees
(Here and below we are using the notation $P_H$ for the orthogonal projection of $K$ onto $H$).  
The problem of determining if a $k$-tuple of contractions has a unitary dilation is well studied, and has had a profound impact on operator theory; see \cite{SzNF70,Paulsen,Arv69,Arv72,Arv10} for example. The basic results in the theory are that a unitary dilation always exists when $k=1$ (this is Sz.-Nagy's unitary dilation theorem \cite{SzNDil}) and when $k=2$ (this is And\^{o}'s dilation theorem \cite{Ando}); also, Parrott \cite{Parrott} gave an example showing that when $k=3$ a unitary dilation might not exist.

In case there is a dilation, it can be shown that if $T_i$ is not a unitary for some $i$ then $K$ has to be infinite dimensional. Now, one reason to seek a unitary dilation for a given $k$-tuple of commuting operators, is to better understand $T_1, \ldots, T_k$ as a ``piece" of the $k$-tuple $U_1, \ldots, U_k$ --- given that $k$-tuples of commuting unitaries are particularly well understood. On the other hand, in the case when $T_1, \ldots, T_k$ act on a finite dimensional space, it is not entirely clear that unitaries acting on an infinite dimensional space are really better understood. One is naturally let to consider a dilation theory that involves only finite dimensional Hilbert spaces.

\begin{definition}
Let $T_1,\ldots,T_k$ be commuting contractions on $H$, and let $N \in \mb{N}$. A \emph{unitary $N$-dilation for $T_1, \ldots, T_k$} is a $k$-tuple of commuting unitaries $U_1,\ldots,U_k$ acting on a space $K\supseteq H$ such that
\be\label{eq:multidil}
T_1^{n_1} \cdots T_k^{n_k} = P_H U_1^{n_1} \cdots U_k^{n_k} P_H ,
\ee
for all non-negative integers $n_1, \ldots, n_k$ satisfying $n_1 + \ldots + n_k \leq N$.
\end{definition}

We will only be interested in $N$-dilations acting on finite dimensional spaces.
In \cite{Egervary} Egerv\'ary showed that every contraction $T$ on a finite dimensional space has a unitary $N$-dilation acting on a finite dimensional space --- this should be thought of as a finite dimensional version of Sz.-Nagy's dilation theorem. In \cite{LevyShalit10} this idea was revisited and some consequences were explored. The goal of this paper is to show that an analogue of Egerv\'ary's result holds for two commuting operators --- this can be thought of as a finite dimensional version of And\^{o}'s dilation theorem. We obtain several other related results as well, as explained below.

\subsection{Dilations and $N$-dilations}
It will be convenient to denote by $\cP_N(k)$ the space of complex polynomials in $k$ variables of degree less than or equal to $N$. Put $\cP(k) = \bigcup_N \cP_N(k)$. 

Section \ref{sec2} will be devoted to the proof of the following theorem:

\begin{theorem}\label{thm:dilations}
Let $H$ be a Hilbert space, $\dim H = n$. Let $T_1,\ldots,T_k$ be commuting contractions on $H$. 
The following are equivalent:
\begin{enumerate}
\item The $k$-tuple $T_1,\ldots,T_k$ has a unitary dilation.
\item For every $N$, the $k$-tuple $T_1,\ldots,T_k$ has a unitary $N$-dilation that acts on a finite dimensional space.
\end{enumerate}
When the conditions hold, the regular unitary $N$-dilation can be taken to act on a Hilbert space of dimension $n^2(n+1)\frac{(N+k)!}{N!k!} + n$.
\end{theorem}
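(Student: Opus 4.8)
The plan is to prove the two implications separately, with the bulk of the work in $(1)\Rightarrow(2)$, since $(2)\Rightarrow(1)$ should follow by a compactness/normal-families argument: if for each $N$ we have a unitary $N$-dilation, we may assume (after a standard reduction) that the dilation spaces are nested or that the relevant compressions of words of length $\le N$ stabilize; letting $N\to\infty$ and passing to a weak limit of the unitary tuples, or alternatively using that the sequence of ``$N$-dilation data'' gives a positive-semidefinite moment-type functional on $\cP(k)$ that extends, by a limiting argument, to a genuine dilation. I expect the cleanest route is via Stinespring/Arveson: condition (2) for all $N$ produces a unital completely positive map on the operator system generated by all words, which yields a unitary dilation by Arveson's dilation theorem; I would cite \cite{Paulsen,Arv69} for this.

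For $(1)\Rightarrow(2)$, I would use the \emph{regular} unitary dilation and cut it down to finite dimensions degree by degree. Assume $U_1,\dots,U_k$ on $K\supseteq H$ is a unitary dilation of $T_1,\dots,T_k$; by the And\^o/Sz.-Nagy theory (or directly, since a dilation exists) we may take it to be the \emph{minimal regular} dilation, so that $K$ is the closed span of $\{U_1^{m_1}\cdots U_k^{m_k} h : m_i\in\mb{Z}, h\in H\}$ and the positivity condition characterizing regular dilations holds. The key point: to match $q(T)=P_H q(U)|_H$ for all $q\in\cP_N(k)$, it suffices to match the finitely many ``monomial moments'' $P_H U_1^{n_1}\cdots U_k^{n_k} P_H$ with $n_1+\dots+n_k\le N$, and for the regular dilation these are governed by the spans of $U^{\alpha}H$ for multi-indices $\alpha$ with $|\alpha|\le N$ on either side. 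So I would define
\be
K_N \;=\; \spn\bigl\{\, U_1^{m_1}\cdots U_k^{m_k}\, H \;:\; m\in\mb{Z}^k,\ |m|\le N \,\bigr\},
\ee
a subspace of dimension at most $n\cdot\#\{m\in\mb{Z}^k:|m|\le N\}$, but this is not invariant under the $U_i$, so the compressions $V_i=P_{K_N}U_i|_{K_N}$ are contractions, not unitaries. The heart of the argument is therefore to enlarge $K_N$ slightly and build genuine commuting unitaries on a finite dimensional space whose words of length $\le N$ still compress correctly onto $H$.

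The mechanism for that enlargement is Egerv\'ary's trick applied in a multivariable, coordinated way. On $K_N$ the operators $V_i$ act, and a word $V_1^{n_1}\cdots V_k^{n_k}$ agrees with $P_H U_1^{n_1}\cdots U_k^{n_k}P_H$ on $H$ as long as the intermediate vectors stay inside $K_N$, i.e. for $|n|\le N$. What one needs is to replace the tuple $(V_i)$ by commuting unitaries $(\tilde U_i)$ on $K_N\oplus E$ (for a suitable finite dimensional $E$) agreeing with $V_i$ up to errors that only show up in words of length $>N$. I would model this on the cyclic/shift construction: pick an auxiliary cyclic structure of ``length $N$'' (a truncated $\mb{Z}^k$-action), and set $\tilde U_i$ to act as $V_i$ followed by a correction that routes the defect into the extra summand and brings it back only after $N$ steps, checking commutativity by choosing the corrections to live in a jointly reducing way — this is where the combinatorial bookkeeping of the degree lives. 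The count $n^2(n+1)\tfrac{(N+k)!}{N!k!}+n$ strongly suggests the final space is built from roughly $(n+1)$ copies, or $n$-dimensional defect spaces, attached to each of the $\tfrac{(N+k)!}{N!k!}=\dim\cP_N(k)$ multi-indices, plus the original $H$; I would reverse-engineer the construction to hit exactly that bound, presumably taking $E$ of dimension $n^2(n+1)\dim\cP_N(k)$ and absorbing $K_N$ into it except for the copy of $H$. The main obstacle, and the place requiring real care rather than routine verification, is maintaining \emph{commutativity} of the enlarged unitaries while simultaneously controlling the dimension: the naive per-coordinate Egerv\'ary completion destroys commutativity, so the corrections for the different $\tilde U_i$ must be designed together, and proving that the resulting tuple is both commuting and unitary — and that its short words still see $H$ correctly — is the crux.
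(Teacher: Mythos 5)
Your direction $(2)\Rightarrow(1)$ is essentially on the right track: the paper also passes through Arveson's dilation theorem, by observing that the $N$-dilations give the \emph{matrix-valued} von Neumann inequality for polynomials of degree at most $N$, hence (letting $N\to\infty$) that $\overline{\mb{D}}^k$ is a complete spectral set. Your first suggestion (weak limits of the unitary tuples) would not work as stated --- weak limits of unitaries need not be unitary and joint limits need not commute --- but your fallback via complete positivity is the correct route, provided you make explicit that the $N$-dilations control \emph{matrices} of polynomials, not just scalar polynomials; that is exactly where the completeness of the spectral set comes from.

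The direction $(1)\Rightarrow(2)$ is where the proposal has a genuine gap, in fact two. First, you pass from ``has a unitary dilation'' to ``we may take the minimal \emph{regular} dilation'': this is false in general. A unitary dilation does not imply a regular unitary dilation; regularity requires the positivity conditions (\ref{eq:oi}), which can fail even for pairs of commuting contractions, all of which do have (And\^o) dilations. Second, and more fundamentally, your plan reduces to completing the compressions $V_i = P_{K_N} U_i|_{K_N}$ to \emph{commuting} unitaries on a finite-dimensional enlargement --- and you yourself flag that designing the corrections so as to preserve commutativity ``is the crux.'' That crux is not a routine verification; it is precisely the problem the paper explicitly declines to solve (see the paper's subsection ``A formula for the dilation'': no concrete Egerv\'ary-type formula is known even for two commuting contractions). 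The paper's actual argument avoids this entirely: it applies the Poisson kernel of the polydisc to $rU_1,\ldots,rU_k$ with $r<1$ to write $q(rT)$ as $\int_{\mb{T}^k} P_H\Phi((rU);w)P_H\, q(w)\, dw$, uses Carath\'eodory's theorem on the finite-dimensional real function space spanned by the matrix entries of this density times $\cP_N(k)$ to replace the integral by a convex combination of $M = n(n+1)\frac{(N+k)!}{N!k!}+1$ point masses, lets $r\to 1$ by compactness to get $q(T) = \sum_i q(w_i)A_i$ with $A_i\ge 0$ and $\sum_i A_i = I_H$, and then dilates this POVM by Naimark's theorem to a spectral measure $\{E_i\}$ on a space of dimension at most $Mn$; setting $U_j=\sum_i w_i^j E_i$ produces commuting unitaries for free, since they are all functions of a single spectral measure. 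Commutativity, which is the obstruction in your approach, is automatic in theirs. To salvage your plan you would need to actually construct the coordinated Egerv\'ary completion (which would be a new and interesting result), or switch to the measure-theoretic route.
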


And\^{o}'s dilation theorem \cite{Ando} asserts that every pair of commuting contractions has a unitary dilation. Thus the above theorem immediately implies the following.

\begin{corollary}\label{thm:Ando}
Let $H$ be a Hilbert space, $\dim H = n$. Let $A,B$ be two commuting contractions on $H$. Then for all $N$, there is a Hilbert space $K$ containing $H$, with $\dim K = n^2(n+1)\frac{(N+1)(N+2)}{2} + n$, and two commuting unitaries $U,V$ on $K$ such that
\[q(A,B) = P_H q(U,V) \big|_H \]
for all $q \in \cP_N(k)$.
\end{corollary}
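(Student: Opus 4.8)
The plan splits the equivalence into a soft direction $(2)\Rightarrow(1)$ and a constructive direction $(1)\Rightarrow(2)$, the latter carrying the dimension count. For $(2)\Rightarrow(1)$ I would show that the map $q\mapsto q(T_1,\dots,T_k)$ on the polydisc algebra $A(\mathbb D^k)$ is unital and completely contractive: a matrix polynomial $Q\in M_m(\cP_N(k))$ has some degree $d$, and if $U$ is a unitary $d$-dilation then $Q(T)=(I_m\otimes P_H)Q(U)(I_m\otimes P_H)$ with $\|Q(U)\|\le\sup_{\mathbb T^k}\|Q(z)\|$ because $U$ is a commuting tuple of unitaries, hence $\|Q(T)\|\le\|Q\|_{M_m(A(\mathbb D^k))}$. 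Since $A(\mathbb D^k)$ embeds completely isometrically in $C(\mathbb T^k)$, Arveson's extension theorem together with Stinespring's dilation produces a $*$-representation $\pi$ of $C(\mathbb T^k)$ on some $K\supseteq H$ with $q(T)=P_H\pi(q)|_H$ for all $q$; then $\pi(z_1),\dots,\pi(z_k)$ is a commuting unitary dilation of $T$. (Alternatively, and more concretely: take the ultraproduct of the spaces $K_N$ along a free ultrafilter on $\mb N$; $H$ embeds isometrically, the $U^{(N)}_i$ pass to commuting unitaries, and each relation \eqref{eq:multidil} of degree $d$ persists since it already holds in $K_N$ for every $N\ge d$.)

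For $(1)\Rightarrow(2)$, a unitary dilation $U$ of $T$ (possibly on an infinite-dimensional space) is a fortiori a unitary $N$-dilation, so applying the joint spectral theorem to $U$ and compressing to $H$ gives a $B(H)$-valued positive operator measure $F$ on $\mathbb T^k$ with $\int_{\mathbb T^k}z^\alpha\,dF(z)=T^\alpha$ for all $\alpha\in\mb N^k$ with $|\alpha|\le N$ (and $\int z^{-\alpha}\,dF=T^{*\alpha}$, since $F$ is self-adjoint). The heart of the matter is a Carath\'eodory/Tchakaloff-type reduction: the $B(H)$-valued positive operator measures on $\mathbb T^k$ form a convex cone whose extreme rays are the rank-one point masses $\delta_z\otimes vv^*$, and the moment map $F'\mapsto\bigl(\int z^\alpha\,dF'\bigr)_{|\alpha|\le N}$ carries this cone into a finite-dimensional real vector space — one block of matrix-valued moments per monomial of degree $\le N$, plus the total-mass datum — so by the conic Carath\'eodory theorem $F$ may be replaced by a finitely supported measure $F'=\sum_{j=1}^m\delta_{z_j}\otimes w_jw_j^{\,*}$ with the same moments of order $\le N$, with $m$ controlled by that dimension and with $\sum_j w_jw_j^{\,*}=F'(\mathbb T^k)=I_H$ automatic from the $\alpha=0$ moment. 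A Naimark dilation of $F'$ then gives commuting unitaries $V_1,\dots,V_k$ on a finite-dimensional $K\supseteq H$ with $T^\alpha=P_HV^\alpha|_H$ for $|\alpha|\le N$; since this dilation arises from a genuine spectral measure, it is automatically the regular unitary $N$-dilation of the theorem, and keeping track of the Carath\'eodory bound and the size of the Naimark space yields the stated dimension $n^2(n+1)\frac{(N+k)!}{N!k!}+n$.

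The step I expect to be hardest is this reduction: setting up the operator-valued trigonometric moment problem so that solvability is immediate from the given dilation, proving the $B(H)$-valued form of Tchakaloff's theorem (identifying the extreme rays of the cone and applying Carath\'eodory in the correct ambient space so as to obtain a clean atom count), and executing Naimark's dilation explicitly enough to extract both the dimension and the regularity of the output. By comparison, producing the measure $F$ from the dilation, and both implementations of $(2)\Rightarrow(1)$, should be routine.
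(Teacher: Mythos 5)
You have written a proof plan for Theorem~\ref{thm:dilations} (the equivalence of having a unitary dilation and having finite-dimensional unitary $N$-dilations), not for Corollary~\ref{thm:Ando}. The corollary is an \emph{unconditional} statement about an arbitrary pair of commuting contractions $A,B$; it carries no hypothesis that a unitary dilation exists. Your constructive direction $(1)\Rightarrow(2)$ --- the only part that could yield the corollary --- begins ``a unitary dilation $U$ of $T$ (possibly on an infinite-dimensional space) is a fortiori\dots'', i.e.\ it takes condition (1) as given. For $k=2$ the existence of such a dilation is precisely And\^{o}'s dilation theorem \cite{Ando}, and that is the single ingredient the paper adds to Theorem~\ref{thm:dilations} to obtain this corollary; it appears nowhere in your proposal. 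As written, your argument proves only the conditional statement, so there is a genuine gap: you must invoke (or prove) And\^{o}'s theorem to discharge the hypothesis.

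Modulo that omission, what you sketch is close in spirit to the paper's proof of Theorem~\ref{thm:dilations}. Your $(2)\Rightarrow(1)$ is exactly the paper's argument (complete contractivity over $\mb{T}^k$, then Arveson/Stinespring); the ultraproduct variant is a legitimate alternative. In $(1)\Rightarrow(2)$ you work directly with the compressed spectral measure of $U$ and appeal to an operator-valued Tchakaloff/Carath\'eodory theorem on the cone of positive $B(H)$-valued measures, whereas the paper replaces $U$ by $rU$ with $r<1$, uses the Poisson kernel of the polydisc to get an absolutely continuous representing measure with continuous density, applies the \emph{scalar} Carath\'eodory theorem to the integration functional on a finite-dimensional real function space (giving the explicit atom count $M=n(n+1)\frac{(N+k)!}{N!k!}+1$), and then lets $r\to 1$ by compactness; this sidesteps the extreme-ray analysis you flag as the hard step. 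Both routes finish with Naimark's dilation of the resulting finitely supported POVM, with $\dim K\le Mn$ giving the stated bound (which at $k=2$ is $n^2(n+1)\frac{(N+1)(N+2)}{2}+n$). One smaller caution: your parenthetical claim that the output is ``automatically the regular unitary $N$-dilation'' does not follow from matching only the moments $\int z^\alpha\,dF$ with $|\alpha|\le N$; regularity requires matching the mixed moments $z^\alpha\overline{z}^\beta$ as well, which enlarges the moment space and changes the count (that is the content of Theorem~\ref{thm:reg_dilations}, proved separately).
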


It is known that any $k$-tuple of commuting $2\times 2$ contractions has a unitary dilation (see \cite[p. 21]{Drury} or \cite{Hol90}). Thus we also obtain the next corollary.

\begin{corollary}\label{thm:2X2}
Let $H$ be a Hilbert space, $\dim H = 2$. Let $T_1,\ldots,T_k$ be commuting contractions on $H$. Then for all $N$, there is a Hilbert space $K$ containing $H$, with $\dim K = 12\frac{(N+k)!}{N!k!} + 2$, and a $k$-tuple of commuting unitaries $U_1, \ldots, U_k$ on $K$ such that
\[q(T_1, \ldots, T_k) = P_H q(U_1, \ldots, U_k) \big|_H \]
for all $q \in \cP_N(k)$.
\end{corollary}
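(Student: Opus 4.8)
The plan is to read this off directly from Theorem~\ref{thm:dilations} combined with the cited fact that every $k$-tuple of commuting contractions on a $2$-dimensional Hilbert space has a unitary dilation (see \cite[p.~21]{Drury} or \cite{Hol90}). Concretely: set $n=\dim H=2$. Since $T_1,\ldots,T_k$ are commuting contractions on $H$, the cited result furnishes condition~(1) of Theorem~\ref{thm:dilations}, so condition~(2) holds as well; moreover the regular unitary $N$-dilation may be taken on a space $K\supseteq H$ with $\dim K=n^2(n+1)\frac{(N+k)!}{N!k!}+n$. Substituting $n=2$ yields $\dim K=4\cdot 3\cdot\frac{(N+k)!}{N!k!}+2=12\frac{(N+k)!}{N!k!}+2$, which is the asserted dimension.

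It then remains only to translate the monomial identities \eqref{eq:multidil} defining an $N$-dilation into the polynomial statement claimed. First one records that for any operator $X$ on $K$ the operator $P_H X P_H$, regarded as an operator on $H$, is the same as $P_H X|_H$, so that \eqref{eq:multidil} says $T_1^{n_1}\cdots T_k^{n_k}=P_H\,U_1^{n_1}\cdots U_k^{n_k}\,|_H$ for all $n_1+\cdots+n_k\le N$ (the case $n_1=\cdots=n_k=0$ being the trivial identity $I_H=P_H I_K|_H$, so that constants are handled). Both maps $q\mapsto q(T_1,\ldots,T_k)$ and $q\mapsto P_H\,q(U_1,\ldots,U_k)\,|_H$ are linear in $q$, and the monomials $z_1^{n_1}\cdots z_k^{n_k}$ with $n_1+\cdots+n_k\le N$ form a basis of $\cP_N(k)$; agreement on this basis gives agreement on all of $\cP_N(k)$, which is the conclusion.

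I do not expect a genuine obstacle here: all of the substance resides in Theorem~\ref{thm:dilations} and in the $2\times 2$ dilation result being invoked, and what remains is bookkeeping. The one point worth stating carefully — as above — is the elementary but essential observation that an $N$-dilation as defined via \eqref{eq:multidil} compresses \emph{every} polynomial of degree at most $N$, not merely the monomials, which is immediate from linearity but is precisely what the corollary asserts.
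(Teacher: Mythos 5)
Your proposal is correct and matches the paper's own (one-line) derivation: invoke the cited fact that commuting $2\times 2$ contractions always have a unitary dilation, feed this into Theorem~\ref{thm:dilations} with $n=2$, and substitute into the dimension formula to get $12\frac{(N+k)!}{N!k!}+2$. The linearity bookkeeping you spell out is implicit in the paper but entirely routine, so there is no substantive difference.
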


\subsection{Regular dilations and regular $N$-dilations}
When does a commuting $k$-tuple of contractions have a unitary dilation? This question has received a lot of attention (see for example Chapter I of \cite{SzNF70}, \cite{Opela}, \cite{StoSza} or \cite{CiStoSza}), but it is fair to say that a completely satisfying answer has not yet been found. 
However, there is a stronger notion of dilation --- \emph{regular dilation} --- for which a simple necessary and sufficient condition is known. Before defining regular dilations we introduce some notation.

Let $\cQ^0_N(k)$ denote the set of all functions $f$ on $k$ variables $z_1, \ldots, z_k$ of the form 
\[f(z_1, \ldots, z_k) = q(y_1, \ldots, y_k) , \]
for some $q \in \cP_N(k)$, where for each $i$, $y_i$ is either $z_i$ or $\overline{z_i}$. That is $f$ is an analytic polynomial in some of the variables, and a co-analytic polynomial in the rest of them. Let $\cQ_N(k)$ denote the space spanned by $\cQ^0_N(k)$.  Put $\cQ(k) = \bigcup_N \cQ_N(k)$.

If $T_1, \ldots, T_k$ is a $k$-tuple of commuting contractions, then for any $f \in \cQ(k)$ we define $f(T_1, \ldots, T_k)$ as follows. If $f$ is a monomial of the form 
$$f(z_1, \ldots, z_k) = z_{i_1}^{n_1} \cdots z_{i_s}^{n_s} \overline{z_{j_1}^{m_1} \cdots z_{j_t}^{m_t}} $$ 
(where necessarily $\{i_1, \ldots, i_s\} \cap \{j_1, \ldots, j_t\} = \emptyset$) then we define
\[f(T_1, \ldots, T_k) =  (T_{j_1}^{m_1} \cdots T_{j_t}^{m_t})^* T_{i_1}^{n_1} \cdots T_{i_s}^{n_s} .\]
This definition is expanded linearly to all of $\cQ(k)$.

\begin{definition}
Let $T_1,\ldots,T_k$ be commuting contractions on $H$. A \emph{regular unitary dilation for $T_1, \ldots, T_k$} is a $k$-tuple of commuting unitaries $U_1,\ldots,U_k$ acting on a space $K\supseteq H$ such that
\[f(T_1, \ldots, T_k) = P_H f(U_1, \ldots, U_k) \big|_H \]
for all $f \in \cQ(k)$.
\end{definition}

It is known \cite[Theorem I.9.1]{SzNF70} that a necessary and sufficient condition for a $k$-tuple $T_1, \ldots, T_k$ to have a regular unitary dilation is that for all $S \subseteq \{1, \ldots, k\}$, we have the operator inequality
\be\label{eq:oi}
\sum_{I \subseteq S} (-1)^{|I|}T^*_{i_1} \cdots T_{i_m}^* T_{i_1} \cdots T_{i_m} \geq 0,
\ee
where $I = \{i_1, i_2, \ldots, i_m\}$.


\begin{definition}
Let $T_1,\ldots,T_k$ be commuting contractions on $H$, and let $N \in \mb{N}$. A \emph{regular unitary $N$-dilation for $T_1, \ldots, T_k$} is a $k$-tuple of commuting unitaries $U_1,\ldots,U_k$ acting on a space $K\supseteq H$ such that
\[f(T_1, \ldots, T_k) = P_H f(U_1, \ldots, U_k) \big|_H \]
for all $f \in \cQ_N(k)$.
\end{definition}

In \cite{LevyShalit10} it was proved that every $k$-tuple of {\em doubly commuting} contractions has a regular unitary $N$-dilation acting on a finite dimensional space, for all $N$, and the question was raised, whether condition (\ref{eq:oi}) is necessary and sufficient for a regular unitary $N$-dilation to exist for all $N$. In this paper we show that indeed it is, a result which follows from the following theorem (to be proved in Section \ref{sec:reg}).

\begin{theorem}\label{thm:reg_dilations}
Let $H$ be a Hilbert space, $\dim H = n$. Let $T_1,\ldots,T_k$ be commuting contractions on $H$. 
The following are equivalent:
\begin{enumerate}
\item The $k$-tuple $T_1,\ldots,T_k$ has a regular unitary dilation.
\item For every $N$, the $k$-tuple $T_1,\ldots,T_k$ has a regular unitary $N$-dilation which acts on a finite dimensional space.
\end{enumerate}
When the conditions hold, the regular unitary $N$-dilation can be taken to act on a Hilbert space of dimension $n^2(n+1) \times \dim \cQ_N(k)+ n$.
\end{theorem}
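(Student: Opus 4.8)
\textbf{Proof proposal for Theorem \ref{thm:reg_dilations}.}

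The plan is to mimic the structure of the proof of Theorem \ref{thm:dilations}, replacing ordinary powers $T_1^{n_1}\cdots T_k^{n_k}$ and the polynomial spaces $\cP_N(k)$ by the mixed monomials $f(T_1,\ldots,T_k)$ and the spaces $\cQ_N(k)$ throughout. The implication $(2)\Rightarrow(1)$ should be the easier direction: if for every $N$ there is a regular unitary $N$-dilation $U^{(N)}$ on a finite-dimensional space, then the compressed moments $P_H f(U^{(N)})|_H$ agree with $f(T_1,\ldots,T_k)$ for all $f\in\cQ_N(k)$; a compactness/weak-limit argument (exactly as in the analogous step for Theorem \ref{thm:dilations}, or simply by invoking the Sz.-Nagy--Foia\c{s} criterion \eqref{eq:oi}) then produces an honest regular unitary dilation, since a regular unitary $N$-dilation existing for $N=1$ already forces the positivity conditions \eqref{eq:oi} for the singleton and pair sets $S$, and more generally taking $N=k$ forces \eqref{eq:oi} for all $S\subseteq\{1,\ldots,k\}$. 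Hence $(2)$ implies that \eqref{eq:oi} holds for all $S$, which by \cite[Theorem I.9.1]{SzNF70} gives a regular unitary dilation.

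The substance is $(1)\Rightarrow(2)$ together with the dimension bound. Assuming $T_1,\ldots,T_k$ has a regular unitary dilation, i.e. \eqref{eq:oi} holds for all $S$, I would run the Egerv\'ary-type truncation argument. Starting from the Sz.-Nagy--Foia\c{s} construction of the (generally infinite-dimensional) regular unitary dilation $U_1,\ldots,U_k$ on $K\supseteq H$, one observes that to reproduce $f(T_1,\ldots,T_k)=P_H f(U_1,\ldots,U_k)|_H$ for all $f\in\cQ_N(k)$ one only needs the action of $U$ restricted to the finite-dimensional subspace $K_0$ spanned by $\{m(U_1,\ldots,U_k)h : h\in H,\ m \text{ a mixed monomial of the appropriate degree}\}$; the relevant monomials range over $\cQ_N(k)$, which accounts for the factor $\dim\cQ_N(k)$. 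Then one replaces the non-unitary (merely isometric or partially defined) compressions of $U_i$ to $K_0$ by genuine commuting unitaries on a slightly larger finite-dimensional space, using the structure of the Sz.-Nagy--Foia\c{s} dilation space (a tensor-type construction over $H$ with defect spaces of dimension at most $n$) to keep track of multiplicities; this is where the $n^2(n+1)$ factor and the additive $+n$ (for the original copy of $H$) come from. Concretely, I expect to reuse verbatim the finite-dimensionalization lemma from Section \ref{sec2} — the combinatorial bookkeeping of how many copies of the $n$-dimensional defect spaces appear is identical, only the indexing set of "words" changes from degree-$\le N$ analytic monomials to $\cQ_N(k)$.

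The main obstacle is ensuring that the truncated $k$-tuple can be \emph{simultaneously} completed to \emph{commuting} unitaries on a finite-dimensional space while preserving \emph{all} the mixed moments in $\cQ_N(k)$, not merely the analytic ones. In the ordinary (non-regular) case one only controls $T_1^{n_1}\cdots T_k^{n_k}$; here one must also control products involving adjoints, so the subspace $K_0$ must be chosen invariant (up to the compression) under both the $U_i$ and the $U_i^*$ in the right degree range, roughly doubling the bookkeeping but not the essential difficulty. I would handle this by working with the explicit Schaeffer-type matricial form of the regular unitary dilation, where $H$ sits as a "diagonal" summand and the off-diagonal entries are built from the defect operators $D_{i}=(I-T_i^*T_i)^{1/2}$; truncating this explicit model to words of length $\le N$ in the $2k$ symbols $U_1,\ldots,U_k,U_1^*,\ldots,U_k^*$ that actually occur in elements of $\cQ_N(k)$, and then closing up by appending finitely many extra defect copies to restore unitarity and commutativity, yields a space of the stated dimension $n^2(n+1)\cdot\dim\cQ_N(k)+n$. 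The final step is a dimension count verifying that this appended space indeed has the claimed size, which is routine once the model is written down.
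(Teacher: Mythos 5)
Your plan for the substantive direction $(1)\Rightarrow(2)$ does not match the paper and, as written, has a genuine gap at its core. You propose to truncate an explicit Schaeffer-type model of the regular dilation to the finite-dimensional subspace spanned by words of length $\leq N$ applied to $H$, and then to ``close up by appending finitely many extra defect copies to restore unitarity and commutativity.'' Restoring \emph{commutativity} of the completed unitaries is not a routine final step --- it is the entire difficulty, and you give no mechanism for it; for $k\geq 2$ there is no known Egerv\'ary-type truncation of a commuting (regular) unitary dilation that stays commuting, and the authors say explicitly in Section 1.3 that their proof is non-constructive and that finding such concrete formulas remains open. The paper instead sidesteps completion entirely: it uses the Poisson kernel for the polydisc together with the normal tuple $rU_1,\ldots,rU_k$ to write $q(rT_1,\ldots,rT_k)=\int_{\mb{T}^k}P_H\Phi((rU_1,\ldots,rU_k);w)P_H\,q(w)\,dw$ for all $q\in\cQ_N(k)$ (the point being that elements of $\cQ(k)$ are $n$-harmonic), applies Carath\'eodory's theorem to replace the integral by a convex combination over $M=n(n+1)\dim\cQ_N(k)+1$ points of $\mb{T}^k$, lets $r\to 1$ by compactness to get $q(T)=\sum_i q(w_i)A_i$ with $\{A_i\}$ a POVM, and then Naimark-dilates $\{A_i\}$ to orthogonal projections $\{E_i\}$ on a space of dimension at most $Mn$. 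The unitaries $U_j=\sum_i w_i^j E_i$ then commute \emph{automatically} because they are all functions of one spectral measure; that is where both the commutativity and the dimension $n^2(n+1)\dim\cQ_N(k)+n$ come from. Your dimension count, tied to defect spaces of a truncated model that is never constructed, is not a derivation.

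On $(2)\Rightarrow(1)$: this direction is indeed the easy one, but your specific shortcut is wrong. The terms $T_{i_1}^*\cdots T_{i_m}^*T_{i_1}\cdots T_{i_m}$ appearing in \eqref{eq:oi} are \emph{not} of the form $f(T_1,\ldots,T_k)$ for $f\in\cQ(k)$, since a function in $\cQ(k)$ cannot involve both $z_i$ and $\overline{z_i}$; so possessing an $N$-dilation does not ``force'' \eqref{eq:oi} by direct evaluation, whether $N=1$, $N=k$, or otherwise. The paper's route is the correct one: having regular $N$-dilations for all $N$ shows that the function $n\mapsto (T_1^{(n_1)_-}\cdots T_k^{(n_k)_-})^*T_1^{(n_1)_+}\cdots T_k^{(n_k)_+}$ on $\mb{Z}^k$ is positive definite, because each finite positivity test only uses the identities $f(T)=P_Hf(U)|_H$ for finitely many $f$ of bounded degree; Sections I.7 and I.9 of Sz.-Nagy--Foia\c{s} then produce the regular unitary dilation.
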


From the theorem and the preceding discussion we obtain:

\begin{corollary}
Let $H$ be a Hilbert space, $\dim H = n$. Let $T_1,\ldots,T_k$ be commuting contractions on $H$. 
The following are equivalent:
\begin{enumerate}
\item For every $N$, the $k$-tuple $T_1,\ldots,T_k$ has a regular unitary $N$-dilation which acts on a finite dimensional space.
\item Condition (\ref{eq:oi}) holds for all $S \subseteq \{1, \ldots, k\}$.
\end{enumerate}
\end{corollary}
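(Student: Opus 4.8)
The plan is to obtain the corollary as an immediate consequence of Theorem~\ref{thm:reg_dilations} together with the classical characterization of regular unitary dilations recalled above, \cite[Theorem I.9.1]{SzNF70}. The idea is simply to interpose the intermediate condition ``$T_1,\ldots,T_k$ has a regular unitary dilation'' between the two statements of the corollary. First, I would note that statement~(1) of the corollary is, word for word, statement~(2) of Theorem~\ref{thm:reg_dilations}; hence by that theorem, statement~(1) holds if and only if $T_1,\ldots,T_k$ admits a regular unitary dilation. Second, \cite[Theorem I.9.1]{SzNF70} asserts that $T_1,\ldots,T_k$ admits a regular unitary dilation if and only if the operator inequality \eqref{eq:oi} holds for every $S\subseteq\{1,\ldots,k\}$, which is precisely statement~(2) of the corollary. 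Composing the two equivalences gives (1)~$\Leftrightarrow$~(2), and this completes the argument.

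At the level of the corollary itself I do not expect any real obstacle: all of the substance is carried by Theorem~\ref{thm:reg_dilations} — where the nontrivial point is that the $N$-dilations can be taken \emph{finite dimensional}, via the explicit construction to be done in Section~\ref{sec:reg} — and by the classical result \cite[Theorem I.9.1]{SzNF70}. The one implication worth flagging is (1)~$\Rightarrow$~(2): one must first upgrade ``a finite-dimensional regular unitary $N$-dilation exists for every $N$'' to ``a single (a priori infinite-dimensional) regular unitary dilation exists'', and only then appeal to \eqref{eq:oi}. That upgrade is exactly the implication (2)~$\Rightarrow$~(1) of Theorem~\ref{thm:reg_dilations}, which is established there by a limiting/compactness argument extracting a genuine dilation from the sequence of finite-dimensional $N$-dilations. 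Since we are entitled to invoke that theorem as a black box, no further work is required, and the corollary is merely a convenient repackaging of Theorem~\ref{thm:reg_dilations} in terms of the Brehmer-type positivity condition \eqref{eq:oi}.
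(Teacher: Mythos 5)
Your proposal is correct and matches the paper exactly: the corollary is obtained by composing the equivalence of Theorem~\ref{thm:reg_dilations} with the classical criterion \cite[Theorem I.9.1]{SzNF70} identifying condition~(\ref{eq:oi}) with the existence of a regular unitary dilation. (One immaterial quibble: the paper's proof of the implication you flag proceeds via positive definiteness of the function $T(\cdot)$ on $\mb{Z}^k$ and Sections I.7 and I.9 of \cite{SzNF70}, not a compactness argument, but since you invoke the theorem as a black box this does not affect your argument.)
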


\subsection{A formula for the dilation} The proofs for Theorems \ref{thm:dilations} and \ref{thm:reg_dilations} provided below are non-constructive, and provide little information on how to effectively construct the unitary dilations. This should be contrasted with the results of \cite{LevyShalit10}, all of which had proofs involving concrete, finite dimensional constructions. For example, the $N$-dilation which Egerv\'ary constructs for a contraction $T$ is given by 
\[
U = \begin{pmatrix}
T     &  &  &  & D_{T^*} \\
D_T &  &  &  & -T \\
     & I &   &  &  \\
     &  & \ddots  &  &  \\
     &  &   & I & 0 
\end{pmatrix},
\]
where $D_T = (I-T^*T)^{1/2}$ and $D_{T^*} = (I-TT^*)^{1/2}$, and the matrix $U$ has $(N+1) \times (N+1)$ blocks. It would be interesting to obtain such concrete formulas for the dilation of two commuting contractions. After all, we have a bound on the dimension of the space on which the dilation acts.

One of the goals behind this work was to find a proof of And\^{o}'s inequality for two commuting matrices that does not involve And\^{o}'s dilation theorem or infinite dimensional spaces. This goal has not yet been met.

\section{Proof of Theorem \ref{thm:dilations}}


Let the notation be as in Theorem \ref{thm:dilations}.
\label{sec2}
\subsection{Existence of $N$-dilations implies existence of dilations}\label{subsec:onedir}

Suppose that for all $N$, the $k$-tuple $T_1, \ldots, T_k$ has a unitary $N$-dilation. Fixing $N$, let $U_1, \ldots, U_k$ be a unitary $N$-dilation acting on a (finite dimensional) Hilbert space $K$. If $Q = (q_{i,j})_{i,j=1}^m$ is an $m \times m$ matrix with entries in $\cP_N(k)$, then we have 
\[q_{i,j} (T_1, \ldots, T_k) = P_H q_{i,j}(U_1, \ldots, U_k) \big |_H \]
for all $i,j$, therefore 
\[\Vert \left(q_{i,j} (T_1, \ldots, T_k) \right)_{i,j}\Vert \leq \Vert \left(q_{i,j} (U_1, \ldots, U_k) \right)_{i,j}\Vert ,\]
where the norm is the operator norm on $\underbrace{H \oplus \ldots \oplus H}_{m \textrm{ times}}$ (on the left hand side) or on $K \oplus \ldots \oplus K$ (on the right hand side). The right hand side is less than or equal to $\sup \{\| (q_{i,j}(z))_{i,j}\|_{M_m} : z \in \mb{T}^k \}$, therefore 
\[\Vert \left(q_{i,j} (T_1, \ldots, T_k) \right)_{i,j}\Vert \leq \sup \{\| (q_{i,j}(z))_{i,j}\|_{M_m} : z \in \mb{T}^k \} . \]
Since this holds for all $N$, we find that the polydisc $\overline{\mb{D}}^k$ is a {\em complete spectral set} for $T_1, \ldots, T_k$, so by Arveson's dilation theorem (see \cite[pp. 278--279]{Arv72} or \cite[pp. 86--87]{Paulsen})
the tuple $T_1, \ldots, T_k$ has a unitary dilation.

\subsection{Existence of dilations implies existence of $N$-dilations}

We begin by proving a proposition of independent interest.

\begin{proposition}\label{prop:point_eval}
Suppose that $T_1, \ldots, T_k$ are commuting contractions on $H$, $\dim H = n$, and that they have a unitary dilation $U_1, \ldots, U_k$. Fix $N \in \mb{N}$. Then there is an integer $M$, there are $M$ points $w_i, \ldots, w_M \in \mb{T}^k$ and there are $M$ non-negative operators $A_1, \ldots, A_M \in B(H)$ with $\sum A_i = I_H$, such that
\be\label{eq:point_eval}
q(T_1, \ldots, T_k) = \sum_{i=1}^M q(w_i) A_i \,\,, \,\, \textrm{ for all } q \in \cP_N(k).
\ee
The integer $M$ can be taken to be $n(n+1)\frac{(N+k)!}{N!k!} + 1$.
\end{proposition}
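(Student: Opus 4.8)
The plan is to realize the completely positive map $q \mapsto q(T_1,\ldots,T_k)$ on $\cP_N(k)$ as an integral against an operator-valued measure on $\mb{T}^k$, and then discretize that measure by a Carath\'eodory-type argument. More precisely, since $T_1,\ldots,T_k$ has a unitary dilation $U_1,\ldots,U_k$ on $K$, the spectral theorem for the commuting unitary tuple gives a projection-valued measure $E$ on $\mb{T}^k$ with $U_1^{n_1}\cdots U_k^{n_k} = \int_{\mb{T}^k} z_1^{n_1}\cdots z_k^{n_k}\,dE(z)$, hence for every polynomial $q \in \cP_N(k)$,
\[
q(T_1,\ldots,T_k) = P_H\, q(U_1,\ldots,U_k)\big|_H = \int_{\mb{T}^k} q(z)\, dF(z),
\]
where $F(\omega) = P_H E(\omega)|_H$ is a $B(H)$-valued positive measure with $F(\mb{T}^k) = I_H$. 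First I would make this reduction precise, noting that it only uses the values of $q$ of degree $\le N$, so no convergence issues arise — we are integrating a fixed finite-dimensional space of bounded continuous functions against a finite positive operator-valued measure.

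Next I would set up the discretization. Consider the restriction maps $\cP_N(k) \ni q \mapsto (q(z))_{z\in\mb{T}^k}$; since $T_i$ are contractions with a unitary dilation, the relevant functionals all factor through evaluation at points of $\mb{T}^k$. The key finite-dimensional fact is that the real vector space of Hermitian forms we must match has dimension $d := \dim_{\mb{R}}\{\text{Hermitian } B(H)\text{-linear data determined by } q(T)\}$. Concretely, to pin down $q(T_1,\ldots,T_k)$ for all $q\in\cP_N(k)$ it is enough to know, for a spanning set of monomials $z^\alpha$ with $|\alpha|\le N$, the operators $\int z^\alpha\,dF(z) = T^\alpha$; the number of such monomials is $\frac{(N+k)!}{N!k!}$, and each $T^\alpha\in B(H)$ contributes $n^2$ complex, i.e. at most $2n^2$ real, linear constraints. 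However, by pairing with the adjoint monomial one sees the data is governed by a self-adjoint object on $H\otimes(\text{monomial space})$, and a more careful count — the one the statement is calibrated to — gives that the operator $\sum_i q(w_i)A_i$ is determined by an element of the real cone of positive operators on a space of real dimension $n(n+1)\cdot\frac{(N+k)!}{N!k!}$ (the $\binom{n+1}{2}+\binom{n}{2}=n^2$ collapses to $n(n+1)/2\cdot 2$-type bookkeeping once one uses that $F$ is positive and self-adjointness of $I_H=\sum A_i$). Then I would invoke the classical fact (a Carath\'eodory / Richter-type theorem for operator-valued measures, or equivalently Krein–Milman applied to the weak-$*$ compact convex set of $B(H)_+$-valued probability measures on $\mb{T}^k$ whose moments up to the relevant order are the prescribed ones) that any point of a convex set lying in an affine space of real dimension $D$ is a convex combination of at most $D+1$ extreme points, and that the extreme points of the moment-constrained set are atomic measures supported on finitely many points with positive operator weights. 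This yields points $w_1,\ldots,w_M\in\mb{T}^k$ and positive $A_1,\ldots,A_M\in B(H)$ with $\sum A_i = I_H$ and $q(T_1,\ldots,T_k) = \sum_i q(w_i)A_i$ for all $q\in\cP_N(k)$, with $M \le D+1 = n(n+1)\frac{(N+k)!}{N!k!}+1$.

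The main obstacle I anticipate is getting the dimension count to come out to exactly $n(n+1)\frac{(N+k)!}{N!k!}$ rather than, say, $2n^2\frac{(N+k)!}{N!k!}$ or $n^2\frac{(N+k)!}{N!k!}$. The saving comes from two observations that must be combined carefully: first, the map is completely positive, so the relevant representing measures form a subset of the positive cone and one should work with \emph{Hermitian} moment data, halving the naive complex count of $B(H)$ to $n^2$ real parameters per monomial; second — and this is the subtler point — one of those real dimensions is consumed by the normalization $\sum A_i = I_H$, or alternatively the ``constant'' monomial $\alpha=0$ contributes trivially, and the interplay with extreme-point arguments (Carath\'eodory on a set already constrained to an affine subspace) is exactly where the $+1$ versus no $+1$ must be tracked. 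I would handle this by writing out the affine space of admissible moment sequences explicitly as $\{(M_\alpha)_{|\alpha|\le N} : M_\alpha = M_{\alpha}^* \text{ as appropriate}, M_0 = I_H, M_\alpha = T^\alpha\}$, computing its real codimension inside the cone, and applying the bound $M \le 1 + (\text{number of binding real constraints})$. The spectral-theorem reduction in the first step and the extreme-point structure of atomic operator measures are standard; the entire weight of the proposition rests on this bookkeeping, so I would present it slowly and cross-check against the known scalar case $n=1$, where it should reduce to the classical statement that a positive measure on $\mb{T}^k$ with prescribed moments of total degree $\le N$ can be replaced by one supported on $\le \frac{(N+k)!}{N!k!}+1$ points.
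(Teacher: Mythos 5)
Your overall shape --- represent $q\mapsto q(T_1,\ldots,T_k)$ as integration against a positive $B(H)$-valued measure on $\mb{T}^k$ and then discretize by a Carath\'eodory-type finite-dimensionality argument --- matches the paper's. But two essential pieces are missing, and they are exactly the pieces that carry the proposition. First, your discretization step is not justified as stated. You invoke ``Krein--Milman applied to the weak-$*$ compact convex set of $B(H)_+$-valued probability measures with prescribed moments'' together with finite-dimensional Carath\'eodory, but that set of measures is infinite-dimensional, so Carath\'eodory does not apply to it, and the claim that its extreme points are finitely atomic with a controlled number of atoms is not an off-the-shelf fact for \emph{operator-valued} measures --- it is a real argument you would have to supply (a perturbation argument preserving positivity of operator densities). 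The paper avoids this entirely by never posing an operator-valued discretization problem: it replaces $U$ by $rU$ with $r<1$ so that the spectral-measure representation becomes integration against the \emph{continuous, strictly positive} Poisson density $\Phi((rU_1,\ldots,rU_k);w)$ with respect to Lebesgue measure on $\mb{T}^k$; the matrix entries $f_{ij}(w)=\lel P_H\Phi((rU);w)P_H e_j,e_i\rir$ are then honest continuous scalar functions, and scalar Carath\'eodory applied to the functional $g\mapsto\int g\,dw$ on the finite-dimensional space spanned by $\Re(f_{ij}q),\Im(f_{ij}q)$ produces the atoms, with positivity of the resulting $A_i=a_iP_H\Phi((rU);w_i)P_H$ free from $\Phi>0$. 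Your direct use of the spectral measure of $U$ forecloses this route: a general POVM has no pointwise-defined density, so ``point evaluations'' of the relevant functions do not make sense. (One then recovers the $r=1$ statement by a compactness argument as $r\to1$, which your write-up does not need to address only because it never gets that far.)

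Second, the dimension count --- the only quantitative content of the statement --- is not actually performed. You oscillate between $2n^2$, $n^2$, and $n(n+1)$ real parameters per monomial and explicitly defer the ``bookkeeping'' while acknowledging that the entire weight of the proposition rests on it; reverse-engineering the target $n(n+1)\frac{(N+k)!}{N!k!}$ is not a proof of it. The correct count falls out of the paper's setup in one line: since $P_H\Phi((rU);w)P_H$ is self-adjoint, $f_{ij}=\overline{f_{ji}}$, so only the $n(n+1)/2$ entries with $i\le j$ are needed, each contributing $2\dim\cP_N(k)$ real functions $\Re(f_{ij}q),\Im(f_{ij}q)$, giving $\dim V\le n(n+1)\frac{(N+k)!}{N!k!}$ and hence $M=\dim V+1$ atoms. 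Note also that your proposed ``saving'' from the normalization $\sum A_i=I_H$ plays no role in the count --- the $+1$ comes from Carath\'eodory, and the constant polynomial is what \emph{yields} $\sum A_i=I_H$ rather than consuming a dimension. To repair your proof you should either adopt the Poisson-kernel/scalar-reduction device, or else state and prove an operator-valued Tchakaloff lemma with an explicit atom bound; as written, neither is in place.
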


\begin{proof}
We begin by proving a relation of the type (\ref{eq:point_eval}) for the tuple $rT_1, \ldots, rT_k$, where $r \in (0,1)$. Note that the tuple $rU_1, \ldots, rU_k$ is a (normal) dilation for $rT_1, \ldots, rT_k$, that is
\be\label{eq:norm_dil}
q(rT_1, \ldots, rT_k) = P_H q(rU_1, \ldots, rU_k) P_H 
\ee
for all $q \in \cP(k)$.

Let $\Phi(z;w)$ be the Poisson kernel for the polydisc $\mb{D}^k$ (see, e.g., \cite[p. 17]{RudinPoly}); $\Phi : \mb{D}^k \times \mb{T}^k \rightarrow \mb{R}$ has the following properties:
\begin{enumerate}
\item $\Phi(z;w) > 0$ for $z \in \mb{D}^k, w \in \mb{T}^k$,
\item $\int_{\mb{T}^k} \Phi(z;w) dw = 1$ for all $z \in \mb{D}^k$, where $dw$ denotes normalized Lebesgue measure on $\mb{T}^k$,
\item If $u$ is $n$-harmonic (that is, harmonic in each complex variable separately) in a neighborhood of $\overline{\mb{D}}^k$ then for all $z \in \mb{D}^k$, 
\[ 
u(z) = \int_{\mb{T}^k} \Phi(z;w)u(w) dw .
\]
\end{enumerate}
In particular, the functional calculus for the normal $k$-tuple $rU_1, \ldots, rU_k$ then gives
\[
q(rU_1, \ldots, rU_k) = \int_{\mb{T}^k} \Phi((rU_1, \ldots, rU_k);w)q(w) dw
\]
for all $q \in \cP_N(k)$. Combining this with (\ref{eq:norm_dil}) we obtain
\be\label{eq:Poisson}
q(rT_1, \ldots, rT_k) =  \int_{\mb{T}^k} P_H\Phi((rU_1, \ldots, rU_k);w)P_Hq(w) dw .
\ee
Let $\{e_1, \ldots, e_n\}$ be an orthonormal basis for $H$. Define 
\[
f_{ij}(w) = \lel P_H\Phi((rU_1, \ldots, rU_k);w)P_H e_j, e_i \rir .
\]

Let $V$ be the real vector space spanned by $\{\Re f_{ij}q, \Im f_{ij}q : 1\leq i \leq j \leq n, q \in \cP_N(k)\}$ (note that $f_{ij} = \overline{f_{ji}}$ because $P_H\Phi((rU_1, \ldots, rU_k);w)P_H$ is self-adjoint). The dimension of $V$ is at most $2 \times n(n+1)/2 \times \frac{(N+k)!}{N!k!}$ (the last factor is $\dim \cP_N(k)$). The linear functional on $V$ given by
\bes
g \mapsto \int_{\mb{T}^k} g(w) dw 
\ees
is in the convex hull of point evaluations on $\mb{T}^k$. By Carath\'{e}odory's Theorem \cite[p. 453]{DavDon}, there are $M := n(n+1)\frac{(N+k)!}{N!k!} + 1$ points $w^{(r)}_1\ldots, w^{(r)}_M $ in 
$ \mb{T}^k$ and $M$ positive numbers $a^{(r)}_1, \ldots, a^{(r)}_M$ summing to $1$ such that 
\be\label{eq:convex}
 \int_{\mb{T}^k} g(w) dw = \sum_{i=1}^M a^{(r)}_i g(w^{(r)}_i) \,\, , \,\, \textrm{ for all } g \in V.
\ee
Put 
$$A^{(r)}_i = a^{(r)}_i P_H\Phi((rU_1, \ldots, rU_k);w^{(r)}_i)P_H .$$ 
Combining (\ref{eq:Poisson}) and (\ref{eq:convex}) we obtain 
\bes
q(rT_1, \ldots, rT_k) = \sum_{i=1}^M q(w^{(r)}_i) A^{(r)}_i \,\,, \,\, \textrm{ for all } q \in \cP_N(k).
\ees
Letting $q \equiv 1$, we get $\sum_i A_i^{(r)} = I_H$, so the $A_i^{(r)}$ are 
positive and uniformly bounded. Hence
we get (\ref{eq:point_eval}) by a compactness argument.
\end{proof}

As a corollary we obtain the following sharpening of von Neumann's inequality.

\begin{corollary}\label{cor:vN}
Suppose that $T_1, \ldots, T_k$ are commuting contractions on $H$, $\dim H = n$, that have a unitary dilation $U_1, \ldots, U_k$. Fix $N \in \mb{N}$. Then there is an integer $M$ which is not greater than $n(n+1)\frac{(N+k)!}{N!k!} + 1$, and there are $M$ points $w_i, \ldots, w_M \in \mb{T}^k$ such that
\bes
\|q(T_1, \ldots, T_k)\| \leq \max_{1\leq i \leq M} |q(w_i)|  \,\,, \,\, \textrm{ for all } q \in \cP_N(k).
\ees
\end{corollary}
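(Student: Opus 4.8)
The plan is to derive this corollary directly from Proposition~\ref{prop:point_eval}, which furnishes points $w_1, \ldots, w_M \in \mb{T}^k$ and positive operators $A_1, \ldots, A_M \in B(H)$ with $\sum_{i=1}^M A_i = I_H$ and $M \leq n(n+1)\frac{(N+k)!}{N!k!}+1$ such that $q(T_1, \ldots, T_k) = \sum_{i=1}^M q(w_i) A_i$ for all $q \in \cP_N(k)$. The task is then purely to estimate the operator norm of such a sum.

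First I would bound $\|q(T_1, \ldots, T_k)\| = \|\sum_{i=1}^M q(w_i) A_i\|$ by the triangle inequality: $\|\sum_i q(w_i) A_i\| \leq \sum_i |q(w_i)| \, \|A_i\|$. Since each $A_i \geq 0$, we have $\|A_i\| \leq \sum_j \|A_j\|$; but this is not quite sharp enough on its own, so instead I would use that for positive operators the norm is dominated by the sum: $\|A_i\| \le \|\sum_j A_j\| = \|I_H\| = 1$ is too weak in the wrong direction. The clean argument is: $\|\sum_i q(w_i) A_i\| \leq \max_i |q(w_i)| \cdot \|\sum_i A_i\|$, which holds because each $A_i$ is positive, hence $\sum_i q(w_i) A_i \le \max_i |q(w_i)| \sum_i A_i$ when the $q(w_i)$ are real and nonnegative; for general complex scalars one passes through $\|\sum_i q(w_i) A_i\| \le \sum_i |q(w_i)|\,\| A_i^{1/2}\|^2$ and groups using $\sum_i A_i = I$. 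Concretely: for any unit vector $x \in H$, $\|(\sum_i q(w_i)A_i)x\|$ — better yet, estimate $|\langle \sum_i q(w_i) A_i \, x, y\rangle| \le \sum_i |q(w_i)| \, |\langle A_i x, y\rangle| \le \max_i|q(w_i)| \sum_i \langle A_i^{1/2} x, A_i^{1/2} x\rangle^{1/2}\langle A_i^{1/2} y, A_i^{1/2} y\rangle^{1/2} \le \max_i |q(w_i)| \left(\sum_i \|A_i^{1/2}x\|^2\right)^{1/2}\left(\sum_i \|A_i^{1/2}y\|^2\right)^{1/2} = \max_i|q(w_i)| \cdot \|x\| \cdot \|y\|$, using Cauchy--Schwarz twice and then $\sum_i \|A_i^{1/2}x\|^2 = \langle \sum_i A_i x, x\rangle = \|x\|^2$. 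Taking the supremum over unit vectors $x, y$ gives $\|q(T_1, \ldots, T_k)\| \leq \max_{1\le i\le M} |q(w_i)|$.

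I do not anticipate a genuine obstacle here — the only point requiring a little care is the norm estimate for a convex-type combination of positive operators with complex coefficients, which the double Cauchy--Schwarz argument above handles cleanly; one must resist the temptation to bound $\|A_i\|$ individually, since that loses the cancellation that makes the bound a maximum rather than a sum. The points $w_i$ and the integer $M$ are inherited verbatim from Proposition~\ref{prop:point_eval}, so no new combinatorial input is needed.
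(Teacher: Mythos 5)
Your proposal is correct and follows exactly the route the paper intends: the corollary is stated as an immediate consequence of Proposition~\ref{prop:point_eval}, with the points $w_i$ and the bound on $M$ inherited verbatim, and the only content is the norm estimate $\|\sum_i q(w_i)A_i\| \le \max_i |q(w_i)|$ for positive operators summing to $I_H$, which your double Cauchy--Schwarz argument establishes cleanly (the paper leaves this step implicit). No discrepancy to report.
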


\begin{remark}
By And\^{o}'s dilation theorem, every pair of commuting contractions $T_1, T_2$ has a unitary dilation, and the above results apply. In the case where neither $T_1$ and $T_2$
have eigenvalues of unit modulus, then one may replace the Poisson kernel in the above proof with the  spectral measure of a unitary dilation obtained in the proof of \cite[Theorem 3.1]{AM05}, and it follows that the points $w_1, \ldots, w_M$ all lie in the {\em distinguished variety} associated with $T_1, T_2$. This means they all lie in the intersection $V \cap {\mb T}^2$,
where $V$ is a one dimensional algebraic set that depends
on $T_1$ and $T_2$ but not on $N$. 
\end{remark}

We can now complete the proof of Theorem \ref{thm:dilations}. Let $T_1, \ldots, T_k$ be commuting contractions on $H$, $\dim H = n$, that have a unitary dilation. Let $w_1, \ldots, w_M$ and $A_1, \ldots, A_M$ be as in the conclusion of Proposition \ref{prop:point_eval}. The points $w_i$ all lie on the $k$-torus $\mb{T}^k$, so we write
\bes
w_i = (w_i^1, \ldots, w_i^k) \,\, , \,\, i=1, \ldots, M,
\ees
where $w_i^j \in \mb{T}$ for all $i$ and $j$. Since $\sum_i A_i = I_H$, the $M$-tuple of operators $A_1, \ldots, A_M$ can be thought of as a positive operator valued measure (POVM) on an $M$ point set. By Naimark's Theorem \cite[Theorem 4.6]{Paulsen}, this POVM can be dilated to a spectral measure on an $M$-point set. This just means that there are $M$ orthogonal projections $E_1, \ldots, E_M$ on a Hilbert space $K$ such that $\sum_i E_i = I_K$, for which
\be\label{eq:A_i}
A_i = P_H E_i P_H \quad , \quad i = 1, \ldots, M.
\ee
The familiar proof of Naimark's Theorem via Stinespring's Theorem (see Chapter 4 of \cite{Paulsen}) shows that $K$ can be chosen to be at most $M \times n$ dimensional. For $j = 1, \ldots, k$ we define
\bes
U_j = \sum_{i=1}^M w_i^j E_i.
\ees
Clearly, $U_1, \ldots, U_k$ are commuting unitaries on $K$, and by (\ref{eq:point_eval}) and (\ref{eq:A_i}) they constitute an $N$-dilation for $T_1, \ldots, T_k$. That completes the proof of Theorem \ref{thm:dilations}.

\subsection{Additional remark}

It is not clear what the appropriate analogue of ``commutant lifting" might be in this setting. Indeed, let 
\[ A = \begin{pmatrix} 1/2 & 1/2 \\ 0 & 0 \end{pmatrix} \,\, , \,\, B = \begin{pmatrix} 1/\sqrt{2} & 1/\sqrt{2} \\ 0 & 0 \end{pmatrix} .\]
These are clearly two commuting contractions. But (as one may tediously check) there is no contractive matrix of the form
\[
\begin{pmatrix} B & * \\ * & *  \end{pmatrix}
\]
that commutes with the $1$-dilation for $A$ given by
\[
\begin{pmatrix} A & D_{A^*} \\ D_A & -A  \end{pmatrix} .
\]

\section{Proof of Theorem \ref{thm:reg_dilations}}\label{sec:reg}

\subsection{Existence of regular $N$-dilations implies existence of regular dilations}

Suppose that for all $N$, the $k$-tuple $T_1, \ldots, T_k$ has a regular unitary $N$-dilation. It follows the function $T(\cdot) : \mb{Z}^k \rightarrow B(H)$ given by 
\bes
T(n_1, \ldots, n_k) = (T_1^{(n_1)_-} \cdots T_k^{(n_k)_-})^* T_1^{(n_1)_+} \cdots T_k^{(n_k)_+}
\ees
is positive definite (here $n_+ := \max\{n,0\}$ and $n_- := \max\{-n,0\}$). By the results of Sections I.7 and I.9 in \cite{SzNF70}, $T_1, \ldots, T_k$ has a regular unitary dilation.

\subsection{Existence of regular dilations implies existence of regular $N$-dilations}
The proof is similar to the proof of Theorem \ref{thm:dilations}. We begin with an analogue of Proposition \ref{prop:point_eval}.

\begin{proposition}\label{prop:point_eval_regular}
Suppose that $T_1, \ldots, T_k$ are commuting contractions on $H$, $\dim H = n$, that 
have a regular unitary dilation $U_1, \ldots, U_k$. Fix $N \in \mb{N}$. Then there is an integer $M$, there are $M$ points $w_i, \ldots, w_M \in \mb{T}^k$ and there are $M$ non-negative operators $A_1, \ldots, A_M \in B(H)$ with $\sum A_i = I_H$, such that
\be\label{eq:point_eval_regular}
q(T_1, \ldots, T_k) = \sum_{i=1}^M q(w_i) A_i \,\,, \,\, \textrm{ for all } q \in \cQ_N(k).
\ee
The integer $M$ can be taken to be $n(n+1) \times \dim \cQ_N(k) + 1$.
\end{proposition}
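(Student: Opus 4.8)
The plan is to mirror the proof of Proposition~\ref{prop:point_eval} almost line for line, the only genuinely new ingredient being the observation that every $f \in \cQ_N(k)$ is $n$-harmonic, so that the polydisc Poisson kernel applies to it. First I would fix $r \in (0,1)$ and note that $rU_1,\ldots,rU_k$ is a commuting tuple of normal operators with joint spectrum in $(r\mb{T})^k \subseteq \mb{D}^k$, and that it ``regularly dilates'' $rT_1,\ldots,rT_k$ in the sense that $f(rT_1,\ldots,rT_k) = P_H f(rU_1,\ldots,rU_k) P_H$ for all $f \in \cQ(k)$: for a mixed monomial $f(z) = z_{i_1}^{n_1}\cdots z_{i_s}^{n_s}\overline{z_{j_1}^{m_1}\cdots z_{j_t}^{m_t}}$ this follows by multiplying the regular-dilation identity $f(T_1,\ldots,T_k) = P_H f(U_1,\ldots,U_k)|_H$ by $r^{n_1+\cdots+n_s+m_1+\cdots+m_t}$, and one extends linearly; along the way one checks that the joint functional calculus of the commuting normal tuple $rU_1,\ldots,rU_k$ applied to such a monomial returns $(U_{j_1}^{m_1}\cdots U_{j_t}^{m_t})^* U_{i_1}^{n_1}\cdots U_{i_s}^{n_s}$, which is immediate from commutativity and the joint spectral theorem.

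Next, since each generator of $\cQ^0_N(k)$ is, in every complex variable separately, either a holomorphic polynomial, an anti-holomorphic polynomial, or a constant, every $f \in \cQ_N(k)$ is $n$-harmonic on a neighbourhood of $\overline{\mb{D}}^k$. Hence property~(3) of the Poisson kernel $\Phi$ gives $f\big((rU_1,\ldots,rU_k)\big) = \int_{\mb{T}^k}\Phi\big((rU_1,\ldots,rU_k);w\big)f(w)\,dw$ for all $f \in \cQ_N(k)$, and compressing to $H$ yields
\[
f(rT_1,\ldots,rT_k) = \int_{\mb{T}^k} P_H\Phi\big((rU_1,\ldots,rU_k);w\big)P_H\, f(w)\,dw, \qquad f \in \cQ_N(k).
\]
From here the argument is exactly as in Proposition~\ref{prop:point_eval}: with $\{e_1,\ldots,e_n\}$ an orthonormal basis of $H$ and $f_{ij}(w) = \lel P_H\Phi((rU_1,\ldots,rU_k);w)P_H e_j, e_i \rir$, let $V$ be the real span of $\{\Re(f_{ij}f),\ \Im(f_{ij}f) : 1 \le i \le j \le n,\ f \text{ in a basis of }\cQ_N(k)\}$, so $\dim_{\mb R}V \le n(n+1)\dim\cQ_N(k)$. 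Carath\'eodory's theorem applied to the functional $g \mapsto \int_{\mb{T}^k}g\,dw$ on $V$ produces $M := n(n+1)\dim\cQ_N(k) + 1$ points $w^{(r)}_1,\ldots,w^{(r)}_M \in \mb{T}^k$ and positive weights summing to $1$ with $\int_{\mb{T}^k}g\,dw = \sum_i a^{(r)}_i g(w^{(r)}_i)$ for $g \in V$. Setting $A^{(r)}_i := a^{(r)}_i P_H\Phi((rU_1,\ldots,rU_k);w^{(r)}_i)P_H \ge 0$ (positive because $\Phi$ is positive and $a^{(r)}_i>0$) and combining with the display gives $q(rT_1,\ldots,rT_k) = \sum_{i=1}^M q(w^{(r)}_i)A^{(r)}_i$ for all $q \in \cQ_N(k)$; taking $q \equiv 1 \in \cQ_N(k)$ forces $\sum_i A^{(r)}_i = I_H$, so the $A^{(r)}_i$ are uniformly bounded. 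A compactness argument as $r \to 1$ along a subsequence with $w^{(r)}_i \to w_i$ and $A^{(r)}_i \to A_i$, together with $q(rT_1,\ldots,rT_k) \to q(T_1,\ldots,T_k)$ for $q \in \cQ_N(k)$, yields (\ref{eq:point_eval_regular}) with $A_i \ge 0$ and $\sum_i A_i = I_H$.

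I do not expect a serious obstacle here; the three places where one must verify that $\cQ_N(k)$ genuinely behaves like $\cP_N(k)$ are all routine: $n$-harmonicity of mixed analytic/co-analytic polynomials (immediate from the description above); consistency of the normal functional calculus of $rU_1,\ldots,rU_k$ with the monomial definition of $f(T_1,\ldots,T_k)$ used throughout the paper (commutativity plus the joint spectral theorem); and the fact that the $r$-scaled tuple dilates $rT_1,\ldots,rT_k$ on all of $\cQ(k)$, not merely on analytic polynomials, which follows by homogeneity from the regular-dilation hypothesis on $U_1,\ldots,U_k$. The only real ``content'' is recognising that the polydisc Poisson machinery of Proposition~\ref{prop:point_eval} carries over verbatim once $\cP_N(k)$ is replaced by $\cQ_N(k)$, and keeping track of the resulting dimension count $n(n+1)\dim\cQ_N(k)+1$.
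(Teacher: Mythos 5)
Your proposal is correct and follows exactly the route the paper takes: the paper's own proof simply observes that functions in $\cQ(k)$ are $n$-harmonic, so the Poisson-kernel and Carath\'eodory argument of Proposition \ref{prop:point_eval} carries over verbatim with the dimension count adjusted to $n(n+1)\dim\cQ_N(k)+1$. The details you supply (the $r$-scaling via homogeneity, the consistency of the normal functional calculus on mixed monomials) are exactly the routine verifications the paper leaves implicit.
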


\begin{proof}
The proof is similar to the proof of Proposition \ref{prop:point_eval}. One just needs to notice that the functions in $\cQ(k)$ are all $n$-harmonic; thus one may use the Poisson kernel to get equation (\ref{eq:Poisson}) for all $q \in \cQ_N(k)$. The rest of the proof is the same, except for the dimension count.
\end{proof}

As in the previous section, the above proposition immediately implies a von Neumann type inequality for functions in $\cQ_N(k)$. 

The proof of Theorem \ref{thm:reg_dilations} is completed in precisely the same way as the proof of Theorem \ref{thm:dilations}, by dilating the $A_i$'s to a family of pairwise orthogonal projections.

\begin{remark}
In \cite[Section 4]{LevyShalit10}, it was shown that if $T_1, \ldots, T_k$ is a {\em doubly commuting} tuple of contractions (meaning that operators commute and also $T_i T_j^* = T_j^* T_i$ for all $i \neq j$), then it has a regular unitary $N$-dilation, and an equation such as (\ref{eq:point_eval_regular}) was obtained as a corollary. All doubly commuting $k$-tuples of contractions satisfy (\ref{eq:oi}), so one may also apply Theorem \ref{thm:reg_dilations} to see that a doubly commuting $k$-tuple has an $N$-regular dilation. However, the proof from \cite{LevyShalit10} not only provides a much smaller dimension on which the dilation acts (it is only $n(N+1)^k$) and a smaller number of points are needed for (\ref{eq:point_eval_regular}), but --- more importantly --- it provides an algorithm for constructing the dilation as well as for finding the points $w_i$ and the operators $A_i$.
\end{remark}

\newpage

\section{Acknowledgments}

The first author was partially supported by National Science Foundation Grant DMS 0966845.
The second author would like to thank Ken Davidson for the warm and generous hospitality provided at the University of Waterloo.

\bibliographystyle{amsplain}

\end{document}